\newtheorem{theorem}{Theorem}
\newtheorem{remark}[theorem]{Remark}
\newtheorem{proposition}[theorem]{Proposition}
\newtheorem{definition}[theorem]{Definition}%[section]
\theoremstyle{remark}
\begin{document}

\title{A Functional Hodrick-Prescott Filter}

\author{Boualem Djehiche\thanks{Department of Mathematics, The Royal Institute of
 Technology, S-100 44 Stockholm, Sweden. e-mail: boualem@math.kth.se}\, \thanks{Financial support from the Swedish Export Corporation (SEK) is gratefully acknowledge
 }\,\, and \,  Hiba Nassar\thanks{School of Computer Science, Physics and Mathematics, Linnaeus University, Vejdesplats 7, SE-351 95 V\"axj\"o, Sweden. e-mail: hiba.nassar@lnu.se}}

\date{ November 14, 2013}
 \maketitle
 
\begin{abstract}
We propose a functional version of the Hodrick-Prescott filter for functional data which take values in an infinite dimensional separable Hilbert space. We further characterize the associated optimal smoothing parameter when the associated linear operator is compact and the underlying distribution of the data is Gaussian.

\end{abstract}
\bigskip
\noindent
\small{ {\it JEL classifications}: C5, C22, E32.}
\medskip

\noindent \small{ {\it AMS 2000 subject classifications}: 62G05, 62G20.}

\medskip
\noindent
\small{{\it Key words and phrases}: Inverse problems, adaptive estimation, Hodrick-Prescott filter, smoothing, signal extraction, Gaussian measures on a Hilbert space.}

\section {Introduction}
Functional data analysis is attracting a lot of interest within the fields of statistical estimation and statistical inverse problems  (see Ramsay and Silverman (1997), Bosq (2000),  M\"uller and Stadtm\"uller (2005), Ferraty and Vieu (2006), Stuart (2010)  and Agapiou, Larsson and Stuart (2012) among many other contributions). This is due to the ability of modern instruments to perform tightly spaced measurements so that these data can be seen as samples of curves,  making classical data smoothing techniques and regression models, such as generalized linear models, inadequate tools. Therefore, the need of extending statistical methods from multivariate data to functional data remains an important task for modern statistical theory. Fields of application where functional data are by now natural objects of study include chemometrics (see e.g. Frank and Friedman (1993) and Hastie and Mallows (1993)), climatology (Besse, Cardot and Stephenson (2000)), finance (Preda and Saporta (2005)) and  linguistic (Hastie {\it et al.} (1995)), to mention only few. 

\medskip\noindent
 The classical Hodrick-Prescott filter (called henceforth the HP filter)  is  widely used in graduation or data smoothing, and remains a powerful tool to construct life tables in actuarial science and identify trends or business growth, structural breaks and anomalies in econometric data series. The univariate HP filter extracts a 'signal' (also called trend in the economic literature) $y(\alpha,x)=(y_1(\alpha,x),\ldots, y_T(\alpha,x))$ from a noisy time series $x=(x_1,\ldots,x_T)$ as a minimizer of
\begin{equation}\label{HP}
\sum_{t=1}^T(x_t-y_t)^2+\alpha\sum_{t=3}^{T}( y_{t}-2 y_{t-1}+ y_{t-2})^2,
\end{equation}
with respect to $y=(y_1,\ldots,y_T)$, for an appropriately chosen positive parameter $\alpha$, called the smoothing parameter. To determine an appropriate value of the smoothing parameter $\alpha$, Hodrick and Prescott (1997) suggest the time series $(x,y)$  satisfies the following linear mixed model:
\begin{equation}\label{M-HP}
\left\{\begin{array}{lll}
x=y+u,\\
Py=v.
\end{array}\right.
\end{equation}
where, $u\sim N(0,\sigma_u^2I_T)$ and $v\sim N(0,\sigma_v^2I_{T-2})$ ($I_T$ and $I_{T-2}$ denote the $ T \times T$ and $(T-2) \times (T-2)$ identity matrices, respectively) and $P$ is the second order differencing operator $(Py)(t):=y_{t}-2 y_{t-1}+ y_{t-2}, \, t=3,\ldots,T$. Using this model, the appropriate smoothing parameter turns out to be the so-called 'noise-to-signal ratio' $\alpha^*=\sigma_u^2/\sigma_v^2$ (see Schlicht (2005)). This parameter satisfies
\begin{equation}\label{opt-alpha-bis}
E[\, y|\,x]= y(\frac{\sigma_u^2}{\sigma_v^2},x)
\end{equation}
and is optimal in the sense that (see Dermoune {\it et al.} (2009)) the smoothing parameter  minimizes the mean square difference between the 'optimal signal' $y(\alpha,x)$ and the conditional expectation $E[\, y|\,x]$ which is the best predictor (in the $L^2$-norm) of any signal $y$ given the time series $x$. More precisely,
 \begin{equation}\label{opt-alpha}
\sigma_u^2/\sigma_v^2=\arg\min_{\alpha}\left\{\|E[\, y|\,x]-y(\alpha,x)\|^2\right\}.
\end{equation}
Furthermore,  Dermoune {\it et al.} (2009) proposed a multivariate version of the HP filter and determined the possible optimal smoothing parameters.

\medskip\noindent  In this paper, we propose a functional version of the HP filter to extract a 'smooth signal' $y$ from observations $x$ which take values in some function space and are corrupted by a noise $u$ which is apriori unobservable, where the smoothness of the signal is measured by the action of a given linear operator $A$ on $y$, possibly corrupted by a noise $v$  which is also apriori unobservable:
\begin{equation}\label{IM-HP}
\left\{\begin{array}{lll}
x=y+u,\\
Ay=v,
\end{array}\right.
\end{equation}
The filter is of the same form as (\ref{M-HP}), where the second order differencing operator $P$ is replaced by a linear operator $A$. Optimality of such a signal is defined through 

\begin{equation*}\label{HP-trend}
y(\alpha,x) := \arg \min_y \left\{ \left\| x-y \right\|_{H_1}^2 + \alpha \left\|Ay\right\|_{H_2}^2 \right\}, 
\end{equation*}
for appropriate spaces $H_1$ and $H_2$, for a given 'smoothing parameter' $\alpha>0$. 

\medskip\noindent In this paper we consider the case where $H_1$ and $H_2$ are separable Hilbert spaces, and $A: H_1\longrightarrow H_2$ is a compact linear operator. Examples include the following cases of particular interest in quantum mechanics among many other cases (see \cite{Do} for further details): 

\medskip\begin{enumerate}
\item[a.] $H_1=H_2:=L^2(\Omega)$, where, $\Omega$ is a bounded domain in $\mathbb{R}^d$, and $A:=\left(-\Delta\right)^{-\gamma}$, for some $\gamma> d/2$, which is a trace-class operator.

\item[b.] $H_1=H_2:=L^2(\Omega)$, where, $\Omega$ is a domain in  $\mathbb{R}^d$, and $V\in L_{loc}^1(\Omega)$ a potential bounded from below and is such that  $(V)^{\frac{d}{2}-\gamma}\in L^1(\Omega)$, for some $\gamma> d/2$. $A:=\left(-\Delta+V\right)^{-\gamma}$  which is a trace-class operator.
\item [c.] $H_1=H_2:=L^2(\Omega)$, where, $\Omega$ is a domain in $\mathbb{R}^d$, and $V$ is a potential for which the Sch\"oridinger operator $-\Delta+V$ has  eigenvalues $\lambda_1<\lambda_2\le \lambda_3\le\cdots$ diverging to infinity such that $\sum_{n}F(\lambda_n)$ is finite, where $F: \mathbb{R}\longrightarrow \mathbb{R}\cup \{\ +\infty\}$ is convex. We may consider the operator $A:=F(-\Delta+V)$ which is trace-class.
\end{enumerate} 

\medskip\noindent  Given two separable Hilbert spaces $H_1$ and $H_2$ and  a compact linear operator $A: H_1\longrightarrow H_2$,  the main result of this paper is a characterization of the optimal smoothing parameter determined by a criterion similar to (\ref{opt-alpha}), when $u$ and  $v$ are independent Hilbert space-valued Gaussian random variables with zero means and covariance operators $\Sigma_u$ and $\Sigma_v$, i.e.,  self-adjoint, positive, semi-definite and bounded operators in their respective Hilbert spaces. Statistical estimation of the smoothing parameter, given the functional data $x$, will be studied in a future work.

\medskip\noindent
The paper is organized as follows. In Section 2, we introduce a functional version of HP filter under the assumption that the operator $A$ is compact. In Section 3, we characterize the optimal smoothing parameter when the covariance operators $\Sigma_u$  and $\Sigma_v $ are trace class operators. In Section 4,  we extend this characterization to the case where the covariance operators $\Sigma_u$  and $\Sigma_v $ are not trace class, which includes white noise, a favorite model for the processes $u$ and $v$ in the HP filter literature (see Schlicht (2005); Dermoune {\it et al.} (2009) and the references therein). 

\medskip\noindent In a subsequent work we will consider the more general case where the operator $A$ is not necessarily compact.

\section{A Hilbert space-valued Hodrick-Prescott filter} 
In this section, we extend the Hodrick-Prescott filter to an infinite dimensional Hilbert space setting.

\medskip\noindent 
Let $H_1$ and $H_2$ be two separable Hilbert spaces, with norms $\left\| \cdot\, \right\|_{H_i}$ and inner products $\langle \cdot ,\cdot \rangle _{H_i},\,\, i=1,2$, and $x \in H_1$ be a functional time series of observables. As a natural extension of the finite-dimensional case, a functional version of the Hodrick-Prescott filter reconstructs an 'optimal smooth signal' $y\in  H_1$ that solves an equation $Ay=v$, corrupted by a noise $v$ which is apriori unobservable, from observations $x$ corrupted by a noise $u$ which is also apriori unobservable: 
\begin{equation}\label{hod}
\left\{\begin {split}
&x= y+u, \\
&Ay=v,
\end{split}
\right.
\end{equation}
given the linear operator  $A: H_1 \longrightarrow H_2$.

\noindent Optimality of such a smooth signal is defined through a Tikhonov-Phillips regularization of the system (\ref{hod}), by introducing a smoothing parameter $\alpha > 0 $ and minimizing 
\begin{equation}
 \left\| x-y \right\|_{H_1}^2 +  \alpha \left\|Ay\right\|_{H_2}^2
 \label{oreg}
 \end{equation}
with respect to $y$, i.e. an 'optimal smooth' signal associated with $x$ is
\begin{equation}\label{HP-trend}
y(\alpha,x) := \arg \min_y \left\{ \left\| x-y \right\|_{H_1}^2 + \alpha \left\|Ay\right\|_{H_2}^2 \right\}. 
\end{equation}
We refer to the nice survey paper by Stuart (2010) on the Bayesian perspective of this type of inverse problems.

\medskip\noindent As noted in  Dermoune {\it et al.} (2009), to get a feasible solution of the problem (\ref{HP-trend}), it is often necessary to regularize the system (\ref{hod}) with a linear operator $B: H_2\longrightarrow H_2$ with suitable properties instead of a smoothing parameter $\alpha > 0$.  In this case, our 'optimal smooth' signal associated with $x$ reads
\begin{equation}\label{HP-H-trend}
y(B,x) := \arg \min_y \left\{ \left\| x-y \right\|_{H_1}^2 +  \langle Ay, BAy\rangle_{H_2} \right\},
\end{equation} 
provided that 
$$
\langle Ah, BAh\rangle_{H_2} \ge 0,\quad h\in H_1.
$$
Manifestly, setting $B=\alpha I_{H_2}$ in (\ref{HP-H-trend}), we get (\ref{HP-trend}).  

\medskip\noindent Next, we extend the selection criterion (\ref{opt-alpha}) of the optimal smoothing parameter to the infinite dimensional setting.

\begin{definition}\label{best}  The optimal smoothing operator associated with the Hodrick-Prescott filter (\ref{hod}) is the minimizer of the difference between the optimal solution $y(B ,x)$, and the conditional expectation $E[y|x]$, the best predictor of any signal $y$ given the functional data $x$:
\begin{equation}\label{B-best}
\hat B = \arg \min_B \left\| E[y|x] - y(B ,x) \right\|_{H_1}^2.
\end{equation}
\end{definition}

\medskip\noindent  This selection criterion is only useful if it is possible to compute an explicit formula for the conditional expectation, which is not always the case. In the finite dimensional setting, joint Gaussian distributions are among the few probability distributions for which the conditional expectation can be explicitly specified. To extend this criterion to the infinite dimensional setting, we should be able to compute the conditional expectation of Hilbert space-valued (jointly Gaussian) random variables. But, it is well known (see e.g. Rozanov (1968) or Skorohod (1974)) that a Gaussian distribution on a Hilbert space are 'meaningful' if and only if the 'mean' is an element of the underlying Hilbert space and the covariance operator is trace class (thus compact). When the covariance operator is not trace class on the underlying Hilbert space, following e.g. Rozanov (1968) or Lehtinen {\it et al.} (1984), the conditional expectation is again well defined only on a larger Hilbert space to which the covariance operator can be continuously extended to a trace class operator (see Section 4 below).

\medskip\noindent In the next section, we give an explicit form of the optimal smoothing operator $\hat B$, assuming $u$ and $v$ independent and Gaussian, with trace class covariance operators, which insures that the vector $(x,y)$ is a.s. a Hilbert-valued Gaussian vector and, relying on Mandelbaum's results (see Mandelbaum (1984)), we derive an explicit form of the conditional expectation $E[y|x]$.

\section{HP filter associated with trace class covariance operators}
\label{HP filter associated with compact operators}
In this section we study the problem (\ref{HP-H-trend}), for a class of smoothing operators $B$ to be specified below. Moreover, we characterize the optimal smoothing operator which solves (\ref{B-best}), when the covariance operators $\Sigma_u$ and $\Sigma_v$ are trace class. We refer to the classic Reed and Simon (\cite{reed-simon}) for an introduction to compact operators.

\medskip\noindent
Given the linear compact operator $A$, the spaces $H_1$ and $H_2$ admit the following orthogonal decompositions 
\[\begin{split}
H_1 &= \mbox{Ker} (A) \oplus (\mbox{Ker} (A))^\bot,\\
H_2 &= \overline{\mbox{Ran} (A)  }\oplus (\mbox{Ran} (A))^\bot. 
\end{split}
\]
The linear operator $A$ being compact, it admits positive  eigenvalues $\lambda_j,\, j\ge 0$ such that $\lambda_n \searrow 0_+$, as $n\to\infty$. Let $\{\lambda_j^2\} $ be the eigenvalues of $A^*A$ (which is also compact), and $\{e_j \} \in H_1$ its corresponding set of orthonormal eigenvectors.  Since $A^*A$ is self-adjoint on $H_1$,  an orthonormal eigen-basis of $AA^*$ on $H_2$ can be given by
\[
d_j= \frac{1}{\lambda_j} Ae_j, 
\]
 and
\[
\langle d_j,d_l\rangle =\frac{1}{\lambda_j \lambda_l}\langle e_j,A^*Ae_l\rangle =\left\{\begin{array}{lll} 1,\quad j=l,\\ 0,\quad j\neq l.\end{array}\right.
\]
The system $(\lambda_n, e_n, d_n)$ is called the singular value decomposition (SVD) of $A$, and $A$ satisfies
\begin{equation}\label{A}
Ah= \sum _{j=1}^\infty {\lambda_j \langle h , e_j\rangle d_j}, \qquad h\in H_1, 
\end{equation}
where, the sum converges in the operator norm.	

\medskip\noindent
Therefore, the equation $Ay=v $ has a solution of the form 
\begin{equation}\label{solution1}
 y= y_0 + A^* (AA^*)^{-1} v =y_0+ \sum _{j=1} ^\infty {\frac{1}{\lambda_j } \langle v, d_j \rangle e_j },
\end{equation}  where $y_0 \in \mbox{Ker} (A) $ can be chosen arbitrarily, and 
\begin{equation}\label{solution2}
 x= y_0 + A^*(AA^*)^{-1} v + u.
\end{equation}

\medskip\noindent A stochastic model for $(x,y)$ is manifestly determined by models for $y_0$ and $(u,v)$. We assume 

\medskip{\bf Assumption 1.} $y_0$ deterministic.

\medskip{\bf Assumption 2.}  $u$ and $v$ are independent random variables with zero mean and covariance operators $\Sigma_u$ and $\Sigma_v$ respectively.   

\medskip\noindent Assumption 1 is made to ease the analysis. Assumption 2 is natural because apriori there should not be any dependence between the 'residual' $u$ which is due to the noisy observation $x$ and the required degree of smoothness of the signal $y$. 

\medskip Given Assumptions 1 and 2, in view of (\ref{solution1}) and (\ref{solution2}), it holds that $(x,y)$ has 
 mean $(E[x],E[y])=(y_0, y_0)$, and covariance operator
\begin{equation}\label{cov(x,y)}
\begin{split}
\Sigma &= \left( \begin{array}{ccc}
\Sigma _u + Q_v  && Q_v \\ 
Q_v &&  Q_v 
\end{array} 
\right), 
\end{split}
\end{equation}
where,
\begin{equation}\label{Qv}
Q_v:= A^*(AA^*)^{-1}\Sigma_v(AA^*)^{-1}A.
\end{equation}

\medskip\noindent Let $\Pi:= A^*(AA^*)^{-1}A$ denote the orthogonal projector associated with $A$, i.e. $\Pi: H_1\longrightarrow H_1$ is a linear operator, self-adjoint and satisfies $\Pi^2=\Pi$. It is easily checked that, 
for every $\xi\in H_1$,  the elements $\Pi\xi$ and $(I_{H_1}-\Pi)\xi$ are orthogonal:
\begin{equation}\label{Pi-2}
<\Pi\xi,(I_{H_1}-\Pi)\xi >=0,
\end{equation}
and
\begin{equation}\label{Pi}
\mbox{Ker}(A)=\mbox{Ker}(\Pi)=\mbox{Ran}(I_{H_1}-\Pi).
\end{equation}
Moreover, we have $(I_{H_1}-\Pi)y=y_0$ and $A^*(AA^*)^{-1}v=\Pi y$.

\medskip\noindent  In the next proposition we give an explicit expression of the minimizer $y(B,x)$ in (\ref{HP-H-trend}), for a given smoothing operator $B$.

\begin{proposition}\label{mainprop}
Let $ A:H_1 \longrightarrow H_2$ be a compact operator with the singular system $(\lambda_n,e_n, d_n)$. Assume further that the smoothing operator $B: H_2 \longrightarrow H_2$ is linear, bounded and satisfies
\begin{equation}\label{B}
\langle Ah, BAh\rangle_{H_2} \ge 0,\quad h\in H_1.
\end{equation}
Then, there exists a unique $y(B ,x) \in H_1$ which minimizes the functional 
\[  J_B (y) = \left\| x-y \right\|^2_{H_1} + \langle Ay, BAy\rangle_{H_2}.\]
This minimizer is given by the formula 
\begin{equation}\label{mini}
 y(B, x) = (I_{H_1}+ A^*BA)^{-1} x. 
\end{equation}
Moreover, if the smoothing operator $B: H_2 \rightarrow H_2$ admits the following representation
\begin{equation}
Bh= \sum _{k=1}^\infty {\beta_k\langle h, d_k\rangle d_k},\quad h\in H_2,
\end{equation}
\medskip
where $\beta_k>0,\,\, k=1,2,\ldots$, and the sum converges in the operator norm, i.e. $B$ is linear,  compact and injective, then
\begin{equation}\label{mini-2}
y(B, x) = (I_{H_1}+ A^*BA)^{-1} x = \sum _{j=1}^\infty { \frac{1}{1+\lambda_j^2 \beta_j}\langle x ,e_j \rangle e_j}. 
\end{equation}

\end{proposition}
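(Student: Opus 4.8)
The plan is to regard $J_B$ as a strictly convex quadratic functional and locate its minimizer by completing the square in the operator $I_{H_1}+A^*BA$. First I would rewrite the penalty term as $\langle Ay, BAy\rangle_{H_2} = \langle y, A^*BAy\rangle_{H_1}$, so that
\[
J_B(y) = \|x\|_{H_1}^2 - 2\langle x, y\rangle_{H_1} + \langle y, (I_{H_1}+A^*BA)\,y\rangle_{H_1}.
\]
Hypothesis (\ref{B}) says precisely that $\langle y, A^*BAy\rangle_{H_1}\ge 0$ for all $y$, so $A^*BA$ is positive semi-definite and therefore
\[
\langle y, (I_{H_1}+A^*BA)\,y\rangle_{H_1} \ge \|y\|_{H_1}^2 .
\]
Since $A$ is compact and $B$ bounded, $A^*BA$ is bounded; together with the coercivity above, the spectral theorem (or the Lax--Milgram theorem in the non-self-adjoint case) shows that $I_{H_1}+A^*BA$ is boundedly invertible, so that $(I_{H_1}+A^*BA)^{-1}x$ in (\ref{mini}) is well defined.

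To obtain (\ref{mini}) I would set $y^* := (I_{H_1}+A^*BA)^{-1}x$, write $y = y^* + h$, and complete the square. Noting that the quadratic form $\langle y, A^*BAy\rangle_{H_1}$ only sees the self-adjoint part of $A^*BA$ (which equals $A^*BA$ itself when $B=B^*$, as for the diagonal operator of the second assertion), one finds
\[
J_B(y) = J_B(y^*) + \langle h, (I_{H_1}+A^*BA)\,h\rangle_{H_1} \ge J_B(y^*),
\]
with equality iff $h=0$ by the strict coercivity. Hence $y^*$ is the unique minimizer, which is exactly (\ref{mini}); equivalently, the first variation of $J_B$ vanishes in every direction precisely when $(I_{H_1}+A^*BA)y=x$.

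For the explicit formula (\ref{mini-2}) I would diagonalize $A^*BA$ in the singular basis. From the SVD one has $Ae_j=\lambda_j d_j$, hence $A^*d_j=\tfrac{1}{\lambda_j}A^*Ae_j=\lambda_j e_j$, while the assumed representation of $B$ gives $Bd_j=\beta_j d_j$. Chaining these relations,
\[
A^*BA\,e_j = \lambda_j\,A^*(\beta_j d_j) = \lambda_j^2\beta_j\,e_j,
\]
so that $I_{H_1}+A^*BA$ acts as multiplication by $1+\lambda_j^2\beta_j$ on each $e_j$ and as the identity on $\mathrm{Ker}(A)$. Inverting this diagonal operator and applying it to the expansion of $x$ along $\{e_j\}$ yields (\ref{mini-2}), the component of $x$ in $\mathrm{Ker}(A)$ being left fixed by the filter and hence absent from the sum precisely when $A$ is injective, as in the examples of interest.

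The step needing the most care is the bounded invertibility of $I_{H_1}+A^*BA$ extracted from the one-sided condition (\ref{B}), together with the verification that the resulting critical point is a genuine global minimizer rather than merely a stationary point. Once the coercivity $\langle y,(I_{H_1}+A^*BA)y\rangle_{H_1}\ge\|y\|_{H_1}^2$ is in hand, strict convexity delivers existence and uniqueness simultaneously, and the remaining passage to (\ref{mini-2}) is a routine computation in the singular system.
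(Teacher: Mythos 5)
Your proof is correct and follows essentially the same route as the paper's: coercivity of $I_{H_1}+A^*BA$ from the positivity hypothesis \eqref{B}, bounded invertibility (the paper invokes Riesz representation where you more accurately cite Lax--Milgram), completion of the square around $y^*=(I_{H_1}+A^*BA)^{-1}x$ to get $J_B(y^*+h)=J_B(y^*)+\langle h,(I_{H_1}+A^*BA)h\rangle$, and diagonalization in the singular system for \eqref{mini-2}. Your parenthetical observation that the quadratic form only sees the self-adjoint part of $A^*BA$ is a subtlety the paper's own completed-square identity silently assumes away (it implicitly takes $B=B^*$), so your version is, if anything, slightly more careful.
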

\noindent The proof of this proposition is by now standard (see e.g. Kaipio and Somersalo (2004)), but, we give it here for convenience.

\begin{proof}
In view of (\ref{B}), we have 
\[ \langle x, (I_{H_1}+ A^*BA)x\rangle  = \langle x,x\rangle +\langle x, A^*BA x\rangle = \langle x,x\rangle +\langle Ax,BAx\rangle \geq \left\| x\right\|^2, \]
i.e. the operator $(I+A^*BA)$ is bounded from below. Also 
\[ 
\begin{split}
 | \langle x, (I_{H_1}+A^*BA)y\rangle | &= | \langle x,y\rangle +\langle x, A^*BA y\rangle |\\
 & \leq | \langle x,y\rangle |+  |\langle Ax,BAy\rangle | \\
 & \leq  \left\| x \right\|_{H_1} \left\| y\right\|_{H_1}+ \left\| A\right\|^2\left\|B\right\| \left\| x\right\|_{H_1} \left\|y\right\|_{H_1} \\
 & \leq (1+ \left\| A\right\|^2\left\|B\right\| )  \left\| x \right\|_{H_1}\left\| y\right\|_{H_1},
\end{split}
\]
i.e. the operator $(I_{H_1}+A^*BA)$ is bounded from above. It follows from Riesz' Representation Theorem that the inverse of the operator exists and 
\[ \left\| (I_{H_1} +A^*BA)^{-1} \right\| \leq 1. \]
Hence, $y(B,x)$ in (\ref{mini}) is well defined.
To show that $y(B,x) $ minimizes the functional $J_B$, let $y \in H_1$  arbitrarily chosen, and show that $J_B (y) \geq J_B (y(B,x))$. By decomposing $y$ as
\[ y=y(B,x) + z,\] 
and inserting for the norms of the Hilbert spaces $H_1$ and $H_2$, direct calculations yield
\[ \begin{split}
J_B (y(B,x) + z) &= \left\| x-(y(B,x) +z) \right\|^2_{H_1} + \langle A(y(B,x)+z),BA(y(B,x)+z)\rangle_{H_2}\\
&= J_B (y(B,x)) + \langle z, (I_{H_1}+ A^*BA) z \rangle \\ &+2 \langle z,-x +(I+ A^*BA) y(B,x) \rangle\\
&= J_B (y(B,x)) + \langle z, (I_{H_1}+A^*BA) z \rangle.
\end{split}
\]
The last term is nonnegative and vanishes only if $z=0$. This proves our claim. 

\medskip\noindent In terms of the singular value decomposition of the operators $A$ and $B$, we have
\[ \sum _{j=1} ^\infty { (1+ \lambda _j^2\beta_j)  \langle y(B , x) , e_j \rangle e_j} =\sum _{j=1} ^\infty { \langle x, e_j\rangle e_j}.\]          
By projecting onto the eigenspace $\mbox {Span} \{e_j\} $, we find that 
\[(1+\lambda _j^2\beta_j)  \langle y(B, x) , e_j \rangle = \langle x, e_j\rangle,\]
 i.e. \[ \langle y(B, x) , e_j \rangle = \frac{1}{1+\lambda_j ^2 \beta_j}  \langle x, e_j\rangle.\]
 Hence,
 \begin{equation*}
y(B, x) = (I_{H_1}+ A^*BA)^{-1} x = \sum _{j=1}^\infty { \frac{1}{1+\lambda_j^2 \beta_j}\langle x ,e_j \rangle e_j}. 
\end{equation*}
\end{proof}

\medskip\noindent  We will next compute the conditional expectation of the signal $y$ given the functional data $x$ relaying on the following explicit form of the conditional expectation for jointly Gaussian random variables $X, Y$ with values in a separable Hilbert space $H$, due to Mandelbaum \cite{Mandelbaum}, we recall in the next proposition.

\begin{proposition}\label{Mandelbaum} $($Mandelbaum$)$ Let $X,Y$ be jointly Gaussian $H$-valued random variables. Assume that both $X$ and $Y$ have means $\mu_X$ and $\mu_Y$,  and that the covariance of $X$, $\Sigma_X$, is injective. Then, the conditional expectation of $Y$ given $X$ is 
\begin{equation}
E[Y|X] = \mu_Y+\Sigma _{XY} \Sigma _X^{-1}(X-\mu_X) ,
\label{conditional}
\end {equation}
provided that the operator
 \begin {equation}
 T= \Sigma _{XY} \Sigma _X^{-\frac{1}{2}}
\label{schmidt}
\end{equation}
is Hilbert-Schmidt.
\end{proposition}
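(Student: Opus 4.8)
The plan is to reduce to the centered case and then produce the classical Gaussian decomposition $Y=LX+W$ in which $W$ is independent of $X$. First I would replace $X,Y$ by $X-\mu_X,\,Y-\mu_Y$; since $\sigma(X)=\sigma(X-\mu_X)$ and conditional expectation is translation-equivariant, this is harmless and reduces the target to $E[Y|X]=\Sigma_{XY}\Sigma_X^{-1}X$. The natural candidate predictor is the linear operator $L:=\Sigma_{XY}\Sigma_X^{-1}$, but because $\Sigma_X$ is injective and compact its inverse $\Sigma_X^{-1}$ is unbounded, so neither $L$ nor the vector $LX$ is defined naively. \emph{Giving $LX$ a rigorous meaning is the main obstacle}, and it is precisely where the Hilbert--Schmidt hypothesis on $T=\Sigma_{XY}\Sigma_X^{-1/2}$ is used.

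To this end I would diagonalize $\Sigma_X=\sum_k\sigma_k^2\langle\cdot,f_k\rangle f_k$, where $\sigma_k^2>0$ and $\{f_k\}$ is the complete orthonormal eigenbasis furnished by compactness and injectivity, and introduce the standardized coordinates $\xi_k:=\sigma_k^{-1}\langle X,f_k\rangle$. A short covariance computation shows that the $\xi_k$ are i.i.d.\ standard Gaussian scalars. Formally $\Sigma_X^{-1/2}X=\sum_k\xi_k f_k$, a series that does not converge in $H$; however,
\[
LX \;=\; T\,\Sigma_X^{-1/2}X \;=\; \sum_k \xi_k\,Tf_k,
\]
and since $\sum_k\|Tf_k\|^2=\|T\|_{\mathrm{HS}}^2<\infty$ this last Gaussian series converges in $L^2(\Omega;H)$ and almost surely. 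This defines $LX$ as a centered, square-integrable, $\sigma(X)$-measurable $H$-valued Gaussian vector.

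With $LX$ in hand I set $W:=Y-LX$. The pair $(X,W)$ is again jointly Gaussian, being an $L^2$-limit of linear images of the jointly Gaussian pair $(X,Y)$. I would then evaluate the cross-covariance of $W$ and $X$ coordinatewise against $\{f_k\}$: the contribution of $LX$ equals $\sigma_k Tf_k=\Sigma_{XY}f_k$ in each coordinate, which yields exactly the formal identity $L\Sigma_X=\Sigma_{XY}\Sigma_X^{-1}\Sigma_X=\Sigma_{XY}$, whence
\[
\mathrm{Cov}(W,X)=\Sigma_{XY}-L\Sigma_X=0 .
\]
For jointly Gaussian vectors a vanishing cross-covariance forces independence, so $W$ is independent of $X$.

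Finally, since $LX$ is $\sigma(X)$-measurable while $W$ is independent of $X$ with $E[W]=0$,
\[
E[Y|X]=E[LX|X]+E[W|X]=LX+E[W]=\Sigma_{XY}\Sigma_X^{-1}X,
\]
and restoring the means gives the stated formula. Every delicate point reduces to the unboundedness of $\Sigma_X^{-1}$, and the Hilbert--Schmidt condition on $T$ is exactly what turns each formal manipulation above into a convergent, rigorously justified one.
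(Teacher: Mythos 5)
The paper gives no proof of this proposition: it is quoted as a known result of Mandelbaum (1984), so there is no internal argument to compare yours against. Your blind proof is correct and is essentially the classical argument behind Mandelbaum's theorem: center the variables, use the Hilbert--Schmidt hypothesis to make sense of $LX=\sum_k\xi_k Tf_k$ as an a.s.\ and $L^2$-convergent Gaussian series, verify $\mathrm{Cov}(Y-LX,X)=0$, and invoke the fact that uncorrelated jointly Gaussian vectors are independent, so that $E[Y|X]=LX$. Three points are worth making explicit to close the argument fully. First, the spectral decomposition $\Sigma_X=\sum_k\sigma_k^2\langle\cdot,f_k\rangle f_k$ requires compactness of $\Sigma_X$, which is not among the stated hypotheses but holds automatically because the covariance operator of a bona fide $H$-valued Gaussian variable is trace class. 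Second, $\Sigma_X^{-1/2}$ is unbounded and defined only on the dense subspace $\mathrm{Ran}(\Sigma_X^{1/2})$, so the hypothesis should be read as saying that the densely defined operator $\Sigma_{XY}\Sigma_X^{-1/2}$ extends to a Hilbert--Schmidt operator on $H$; your identity $\sum_k\|Tf_k\|^2=\|T\|_{\mathrm{HS}}^2$ is then legitimate since each $f_k$ lies in that domain. Third, the $L^2$-Cauchy property of the partial sums of $\sum_k\xi_k Tf_k$ comes from $E[\xi_j\xi_k]=\delta_{jk}$ annihilating the cross terms $\langle Tf_j,Tf_k\rangle$ (the vectors $Tf_k$ themselves need not be orthogonal), and almost sure convergence then follows from the $H$-valued $L^2$-bounded martingale property (or It\^o--Nisio). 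With these glosses your argument is a complete and self-contained proof of the quoted result.
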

%\noindent  From this result it follows again that the cross covariance operator $\Sigma_{XY}=T\Sigma^{1/2}_X$ is trace class, %because it is a product of Hilbert-Schmidt operators.

\medskip\noindent Using this proposition we will now characterize the optimal smoothing operator, defined by (\ref{B-best}), associated with the Hodrick-Prescott filter (\ref{HP-H-trend}), under the following Assumptions.

\medskip{\bf Assumption 3.} The independent random variables $u$ and $v$ are respectively $N(0,\Sigma_{u})$ and $N(0,\Sigma_{v})$ distributed, 
 where the covariance operators $\Sigma_u$ and $\Sigma_v$ are positive-definite and trace class operators on $H_1$ and $H_2$ respectively.

\medskip\noindent This assumption implies that $\Pi y=A^*(AA^*)^{-1}v$ and $u$ are also independent. Thus, with regard to the following decomposition of $x$,
$$
x=y_0+\Pi y+\Pi u+(I_{H_1}-\Pi)u,
$$
it is natural to assume that even the orthogonal random variables $\Pi u$ and $(I_{H_1}-\Pi)u$ independent. This would mean that the input $x$ is decomposed into three independent random variables. This is actually the case for the classical HP filter. Also, as we will show below, thanks to this property the optimal smoothing operator has the form of a 'noise to signal ratio' in line with the classical HP filter.

\medskip {\bf Assumption 4.} The orthogonal (in $ H_1$) random variables $\Pi u$ and $(I_{H_1}-\Pi)u$ are independent:
\begin{equation}\label{indep-1}
\Pi\Sigma_u=\Sigma_u\Pi.
\end{equation}

\medskip\noindent We note that (\ref{indep-1}) is equivalent to 
\begin{equation}\label{indep-2}
\Pi\Sigma_u\Pi=\Pi\Sigma_u.
\end{equation}

\medskip\noindent Since the covariance operators $\Sigma_u$ and $\Sigma_v$ are trace class and thus compact, by Riesz' Representation Theorem, there exist uniquely determined $ \mu_k>0,\, k=1,2,\ldots$, such that
\begin{equation}\label{sigma-u}
 \Sigma_uh = \sum_{k=1} ^\infty {\mu_k \langle h,e_k\rangle e_k},\qquad h\in H_1,
 \end{equation}
where, the sum converges in the operator norm.
Similarly for  the covariance operator of $v $, there exists uniquely determined $ \tau_k>0,\, k=1,2,\ldots$, such that
\begin{equation}\label{sigma-v}
\Sigma_vh = \sum_{k=1}^\infty {\tau_k \langle h,d_k\rangle d_k},\qquad h\in H_2,
 \end{equation}
where, the sum converges in the operator norm.

\medskip\noindent Hence, in view of Assumptions 1 and 3, and the relations  (\ref{solution1}) and (\ref{solution2}),  $(x,y)$  has joint Gaussian distribution with  mean $(E[x],E[y])=(y_0, y_0)$, and covariance operator
\[ 
\begin{split}
\Sigma &= \left( \begin{array}{ccc}
\Sigma _u + Q_v  && Q_v \\ 
Q_v &&  Q_v 
\end{array} 
\right), 
\end{split}
\]
provided that

\medskip\noindent{\bf Assumption 5.} The operator
\[Q_v:= A^*(AA^*)^{-1}\Sigma_v(AA^*)^{-1}A\]  is trace class.

\medskip\noindent
In terms of the singular value decompositions of $A$, $\Sigma_v$ and $\Sigma_v$ we have
\[ Q_v = \sum_ {k=1} ^\infty {\frac{\tau_k}{\lambda_k^2}\langle . ,e_k\rangle e_k}.\]

\medskip\noindent Therefore, by Assumption 3, 
\begin{equation}
\Sigma _{XY}= Q_v = \sum_{k=1}^\infty {\frac{\tau_k}{\lambda_k^2} \langle .,e_k\rangle e_k}, 
\label{sigmaxy}
\end{equation}
and 
\begin{equation}
\Sigma _{X} = \Sigma_u + Q_v = \sum_{k=1} ^\infty {\left( \mu_k +\frac{\tau_k}{\lambda_k^2} \right)\langle .,e_k\rangle e_k} 
\label{sigmax}
\end{equation}
are trace class operators.

\medskip\noindent Thus, in view of  Proposition \ref{Mandelbaum}, we have
\begin{equation}\label{cond-H}
E[y|x] =y_0+Q_v\left[\Sigma_u+Q_v\right]^{-1}(x-y_0),
\end{equation}
provided that the operator  
\[
\begin{split}
T&:= \Sigma_{XY} \Sigma _{X} ^{-\frac{1}{2}}\\
& = \sum_{k=1} ^\infty {\frac{\tau _k}{\lambda_k^2} \left( \mu_k +\frac{\tau_k}{\lambda_k^2} \right)^{-\frac{1}{2}} \langle .,e_k\rangle e_k}
\end{split}
\]
 is a Hilbert-Schmidt, i.e.  
\begin{equation}
\label{Thilbertshmidt*}
\begin{array}{lll}
\left\| T \right\|^2_2 = \sum_{k=1}^{\infty} {\left\|Te_k\right\|^2 }= \sum_{k=1}^{\infty}{\left(\frac{\tau_k}{\lambda_k^2}\right)^2 \left( \mu_k +\frac{\tau_k}{\lambda_k^2} \right)^{-1}} \\ \,\,\,\qquad= \sum_{k=1}^{\infty}{\frac{\tau _k}{\lambda_k^2} \left(\frac{\lambda_k^2 \mu_k}{\tau_k}+1 \right)^{-1}} < \infty.
\end{array}
\end{equation}

\medskip\noindent In this case, 
\begin{equation*}
\begin{split}
E[y|x]& =y_0+\sum_{k=1}^{\infty} \langle x-y_0,e_k\rangle \langle \Sigma_{XY} \Sigma _{X}^{-1} e_k ,e_k \rangle e_k\\ & = y_0+\sum_{k=1}^{\infty}  \langle x-y_0,e_k \rangle \frac{\tau_k}{\lambda_k^2} \left( \mu_k +\frac{\tau_k}{\lambda_k^2}\right)^{-1}  e_k
\end{split}
\end{equation*}
or
\begin{equation}\label{cond-H-1}
E[y|x]= y_0+ \sum_{k=1}^{\infty} \langle x-y_0,e_k \rangle \left( 1 +\frac{\lambda_k^2 \mu_k }{\tau_k} \right)^{-1} e_k .
\end{equation}

\medskip\noindent The following theorem is the main result of the paper. It is the infinite dimensional extension of Propositions 2.2 and 3.4 in Dermoune {\it et al.} (2009). 
\begin{theorem}\label{optimalb}
Let Assumptions (1) to (5)  hold, and that
\begin{equation}
\label{Thilbertshmidt}
\left\| T \right\|^2_2=\sum_{k=1}^{\infty}{\frac{\tau_k}{\lambda_k^2} \left(\frac{\lambda_k^2 \mu_k}{\tau_k}+1 \right)^{-1}} < \infty,
\end{equation}
then, for all $x\in H_1$, the smoothing operator (which is linear, compact and injective)
\begin{equation}\label{B-hat}
\hat Bh:=(AA^*)^{-1}A\Sigma_u A^*\Sigma_v^{-1}h=\sum_{k=1}^\infty { \frac{\mu_k}{\tau_k} \langle h,d_k \rangle d_k},\qquad h\in H_2,
\end{equation}
where, the sum converges in the operator norm, is the unique operator which satisfies
\[ \hat B = \arg\min_B\left\| y(B ,x)- E[y|x] \right\|_{H_1}, \]
where the minimum is taken with respect to all linear bounded operators which satisfy the positivity condition (\ref{B}).

\medskip\noindent Furthermore, we have
\begin{equation}\label{funda}
y(\hat B ,x)- E[y|x]=(I_{H_1}-\Pi)(x-E[x]),
\end{equation}
and its covariance operator is
\begin{equation}\label{cond-cov}
\mbox{cov } (y(\hat B ,x)- E[y|x])=(I_{H_1}-\Pi)\Sigma_u.
\end{equation}
In particular, 
\begin{equation}
E\left(\left\| y(\hat B ,x)-E[y|x] \right\|^2_{H_1}\right)=\mbox{trace}\left((I_{H_1}-\Pi)\Sigma_u\right).
\end{equation}
\end{theorem}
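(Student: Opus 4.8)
The plan is to reduce the operator-valued minimization in (\ref{B-best}) to a coefficientwise matching in the singular basis $\{e_j\}$, by exploiting an orthogonal (Pythagorean) splitting of the residual $y(B,x)-E[y|x]$ along $\mbox{Ker}(A)$ and $(\mbox{Ker}(A))^\perp$. First I would record the exact action of the minimizer from Proposition \ref{mainprop}. Since $A^*BA$ annihilates $\mbox{Ker}(A)$, the operator $(I_{H_1}+A^*BA)^{-1}$ preserves the splitting $H_1=\mbox{Ker}(A)\oplus(\mbox{Ker}(A))^\perp$ and acts as the identity on $\mbox{Ker}(A)$; hence $(I_{H_1}-\Pi)y(B,x)=(I_{H_1}-\Pi)x$ for \emph{every} admissible $B$, while on $(\mbox{Ker}(A))^\perp$ the diagonal formula (\ref{mini-2}) applies. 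This is the step I expect to be easiest to overlook but which is in fact the crux: the residual retains the full $\mbox{Ker}(A)$-component of $x$, and this is precisely what will produce the term $(I_{H_1}-\Pi)u$ rather than $0$. I would also note at this point that $\hat B$ is admissible, since $\langle Ah,\hat BAh\rangle_{H_2}=\sum_j(\mu_j/\tau_j)|\langle Ah,d_j\rangle_{H_2}|^2\ge 0$ by $\mu_j,\tau_j>0$.

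Next I would split the residual. Using $E[x]=y_0$, the relation $(I_{H_1}-\Pi)y_0=y_0$, and the fact that the sum in (\ref{cond-H-1}) lies in $(\mbox{Ker}(A))^\perp$ (so that $(I_{H_1}-\Pi)E[y|x]=y_0$), the previous step gives
\[
(I_{H_1}-\Pi)\bigl(y(B,x)-E[y|x]\bigr)=(I_{H_1}-\Pi)x-y_0=(I_{H_1}-\Pi)(x-E[x]),
\]
which is \emph{independent of $B$}. Since the two pieces are orthogonal, Pythagoras yields
\[
\bigl\|y(B,x)-E[y|x]\bigr\|_{H_1}^2=\bigl\|(I_{H_1}-\Pi)(x-E[x])\bigr\|_{H_1}^2+\bigl\|\Pi\bigl(y(B,x)-E[y|x]\bigr)\bigr\|_{H_1}^2 .
\]
The first term does not depend on $B$, so any minimizer must annihilate the second, i.e. satisfy $\Pi y(B,x)=\Pi E[y|x]$. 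Comparing the diagonal coefficients $1/(1+\lambda_j^2\beta_j)$ from (\ref{mini-2}) with $(1+\lambda_j^2\mu_j/\tau_j)^{-1}$ from (\ref{cond-H-1}) (using $\langle y_0,e_j\rangle=0$) shows that $\beta_j=\mu_j/\tau_j$ matches them exactly; this is the operator $\hat B$ in (\ref{B-hat}). Substituting $\hat B$ collapses the second term to zero and simultaneously delivers the identity (\ref{funda}).

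For uniqueness I would argue that demanding $\Pi y(B,x)=\Pi E[y|x]$ \emph{for all} $x$ forces $A^*BAe_j=\lambda_j^2(\mu_j/\tau_j)e_j$ for every $j$; translating through $Ae_j=\lambda_j d_j$ and $A^*d_j=\lambda_j e_j$ gives $Bd_j=(\mu_j/\tau_j)d_j$ on $\overline{\mbox{Ran}(A)}$, so $\hat B$ in (\ref{B-hat}) is the unique admissible operator of the stated injective diagonal form. Finally I would compute the covariance of the residual: since $(I_{H_1}-\Pi)(x-E[x])=(I_{H_1}-\Pi)u$ and $(I_{H_1}-\Pi)$ is self-adjoint,
\[
\mbox{cov}\bigl((I_{H_1}-\Pi)u\bigr)=(I_{H_1}-\Pi)\Sigma_u(I_{H_1}-\Pi)=(I_{H_1}-\Pi)\Sigma_u,
\]
where the last equality uses Assumption 4 in the form (\ref{indep-1}), giving $(I_{H_1}-\Pi)\Sigma_u\Pi=(I_{H_1}-\Pi)\Pi\Sigma_u=0$. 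Taking traces then yields $E\|y(\hat B,x)-E[y|x]\|_{H_1}^2=\mbox{trace}\bigl((I_{H_1}-\Pi)\Sigma_u\bigr)$, since for a centered Gaussian the expected squared norm equals the trace of its (trace class) covariance. As flagged above, the main obstacle is the correct bookkeeping of the kernel component of $x$ inside $y(B,x)$; once that is in place, the optimality, the explicit residual (\ref{funda}), and the trace formula all follow from the orthogonal splitting.
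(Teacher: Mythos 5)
Your proposal is correct and follows essentially the same route as the paper: both split the residual $y(B,x)-E[y|x]$ into its $\mbox{Ker}(A)$-component, which equals $(I_{H_1}-\Pi)(x-E[x])$ independently of $B$, and an orthogonal remainder, then determine $\hat B$ by forcing the remainder to vanish and invoke Assumption 4 for the covariance identity. The only (welcome) difference is that you justify $(I_{H_1}-\Pi)(I_{H_1}+A^*BA)^{-1}=(I_{H_1}-\Pi)$ by invariance of the splitting rather than by the paper's power-series expansion of $(I_{H_1}+A^*BA)^{-1}$, which strictly speaking needs a norm bound on $A^*BA$; and you carry out the identification of $\hat B$ by matching SVD coefficients rather than by the paper's operator algebra, which is an equivalent computation.
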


\begin{proof} Denote $\bar\Pi:=I_{H_1}-\Pi$. For any linear and bounded operator $B$ which satisfies (\ref{B}), we have
$$
y(B ,x)- E[y|x]=\Lambda_1+\Lambda_2,
$$
where,
$$
\Lambda_1:=\bar\Pi x-y_0,\qquad \Lambda_2:=\left(I_{H_1}+A^*BA\right)^{-1}x-\bar\Pi x-Q_v\left[\Sigma_u+Q_v\right]^{-1}(x-y_0).
$$
Therefore,
$$
\left\| y(B ,x)- E[y|x] \right\|^2_{H_1}=\left\| \Lambda_1 \right\|^2_{H_1}+\left\| \Lambda_2\right\|^2_{H_1}+2\langle \Lambda_1,\Lambda_2\rangle.
$$
Since, $\bar\Pi x-y_0\in \mbox{Ker}(A)$ and $Q_v\bar\Pi=0$, it follows that 
$$
\langle \bar\Pi x-y_0, Q_v\left[\Sigma_u+Q_v\right]^{-1}(x-y_0)\rangle =0.
$$
Hence, noting that $y_0=(I_{H_1}-\Pi)y:=\bar\Pi y$, we get
$$
\langle \Lambda_1,\Lambda_2\rangle=\langle \bar\Pi (x-y),\left(\left(I_{H_1}+A^*BA\right)^{-1}-\bar\Pi\right)x\rangle.
$$
But, expressing the operator $\left(I_{H_1}+A^*BA\right)^{-1}$ as a power series of operators, in view of the fact that $\Pi A^*=A^*$ i.e. $\bar\Pi A^*=0$, we have
\begin{equation}\begin{array}{lll}
\bar\Pi\left(I_{H_1}+A^*BA\right)^{-1}&=\bar\Pi\left (I_{H_1}-A^*BA+(A^*BA)^2-\cdots\right)\\ &=
\bar\Pi-\bar\Pi A^*BA+\bar\Pi A^*BAA^*BA-\cdots\\  &=\bar\Pi.
\end{array}
\end{equation}
Hence, 
\begin{equation*}
\langle \Lambda_1,\Lambda_2\rangle=\langle \bar\Pi (x-y),\left(\left(I_{H_1}+A^*BA\right)^{-1}-\bar\Pi\right)x\rangle =0.
\end{equation*}
This yields
$$
\left\|y(B ,x)-E[y|x] \right\|^2_{H_1}=\left\| \Lambda_1 \right\|^2_{H_1}+\left\| \Lambda_2\right\|^2_{H_1}.
$$
Hence, 
\begin{equation}\label{ineq}
\inf_{B}\left\| y(B ,x)-E[y|x] \right\|^2_{H_1}\ge\left\| \bar\Pi x-y_0\right\|^2_{H_1}.
\end{equation}
It remains to show that there is a linear and bounded operator $\hat B$ which satisfies(\ref{B})  and  for which the lower bound in (\ref{ineq}) is attained. Indeed, such an operator satisfies
$$
y(\hat B ,x)- E[y|x]=\bar\Pi x-y_0,\qquad x\in H_1, \,\,y_0\in \mbox{Ker}(A).
$$
This equality holds if and only if
\begin{equation}\label{y0-1}
Q_v\left[\Sigma_u+Q_v\right]^{-1}y_0=0
\end{equation}
and
\begin{equation}\label{y0-2}
\left(I_{H_1}+A^*\hat BA\right)^{-1}=I_{H_1}-\Pi+Q_v\left[\Sigma_u+Q_v\right]^{-1}.
\end{equation}
In fact, (\ref{y0-2}) implies (\ref{y0-1}). Indeed,
$$
\begin{array}{lll}
Q_v\left[\Sigma_u+Q_v\right]^{-1}y_0&=\left(\left(I_{H_1}+A^*BA\right)^{-1}-(I_{H_1}-\Pi)\right)y_0\\ &
 =\sum_{n=0}^{\infty}(-1)^n\left(A^*BA\right)^ny_0-y_0+\Pi y_0\\& =-y_0+ \Pi y_0+y_0+\sum_{n=1}^{\infty}(-1)^n\left(A^*BA\right)^{n-1}A^*BAy_0 \\ &=0,
\end{array}
$$
since, $y_0\in\mbox{Ker}(A)$ i.e. $Ay_0=0$ and a fortiori $\Pi y_0=0$.

\medskip\noindent It remains to find $\hat B$ which solves (\ref{y0-2}). Indeed, multiplying (\ref{y0-2}) with $I_{H_1}+A^*\hat BA$ and then with $\Sigma_u+Q_v$, we get 
$$
\Pi\Sigma_u=A^*\hat B\Sigma_v\left(AA^*\right)^{-1}A.
$$
Multiplying both sides of this equality with $A^*$ and then with $(AA^*)^{-1} A$, it holds that
\begin{equation}
\hat B=(AA^*)^{-1}A\Sigma_u A^*\Sigma_v^{-1}.
\end{equation}
Now, inserting this value of $\hat B$ in (\ref{y0-2}), we necessarily get $\Pi\Sigma_u=\Pi\Sigma_u\Pi$, which holds if and only if $\Pi\Sigma_u=\Sigma_u\Pi$ which is nothing but Assumption 4. That is, Equation(\ref{y0-2}) is solvable if and only if  Assumption 4 holds. Relation (\ref{cond-cov}) is immediate. This finishes the proof.
\end{proof}

\begin{remark}\label{fundamental}

\medskip
\item[1.] Formula (\ref{funda}) displays an interesting representation of the "optimal" filter i.e. the conditional expectation of the signal $y$ given the functional data $x$, in terms of the solution of the purely deterministic inverse problem (\ref{mini}). Namely,
\begin{equation}\label{opt-filter}
 E[y|x]=\left(I_{H_1}+\Sigma_u A^*\Sigma_v^{-1}A\right)^{-1}x-(I_{H_1}-\Pi)(x-E[x]).
\end{equation}

\medskip
\item[2.] The set of parameters $\{\lambda_n, \mu_n,\tau_n,\,\, n\ge 1  \}$ for which the operators $\Sigma_u, Q_v$ are trace class, $T$ is Hilbert-Schmidt and the series (\ref{B-hat}) converges in the operator norm, is not empty. Indeed, for $\lambda_n=n^{-\alpha},\, \mu_{n}=n^{-\beta},\, \tau_n=n^{-\gamma}$, where
$\alpha\ge 2,\, \beta\ge 2\alpha+4,\, \gamma\ge 2\alpha+2$, these operators satisfy the required properties. 

\end{remark}
\subsection*{Example.}
We apply Theorem 4 to the well known backward heat conduction problem (see for instance \cite{nair}). Let $u(s,t)$ represent the temperature at a point $s$ on a thin wire of length $\pi$ at time $t$. Assuming the wire end points are kept at zero temperature. Then $u( \cdot, \cdot)$ satisfies the heat equation 
$$
\frac{\partial u}{\partial t} = \frac{\partial^2 u}{\partial ^2 s} 
$$
with $u(0,t)=0 = u(\pi ,t)$. 

\noindent The problem is the following: knowing the temperature $v(s) := u(s, \tau ), \,\, 0< s \leq \pi,$ at time $t= \tau$, determine the temperature $ y(s): = u(s, t_0)$ at time $ t= t_0 < \tau $.

\medskip\noindent The problem of finding $y$ from the knowledge of $v$ is equivalent to solving the compact operator equation 
$$ Ky= v$$ 
where $ K : L^2 [0, \pi ] \rightarrow  L^2 [0, \pi ]$ is given by (see \cite{nair} fore further details)
$$
(K_{t_0} y)(s)= \sum _ {n=1} ^\infty {e^{-n^2 (\tau - t_0)} \langle y(s) , e_n (s) \rangle e_n(s) },
$$
where, $e_n(s)= \sqrt{ \frac{2}{\pi}} \sin(ns)$ form an orthogonal basis of $L^2 [0,\pi]$. 

\noindent Therefore, the equation $Ky = v$ has a solution of the form 
$$
y(s) = y_0 +\sum _ {n=1} ^\infty {e^{n^2 (\tau - t_0)} \langle v(s) , e_n (s) \rangle e_n(s) }
$$
where, $y_0 \in \ker ( K)$.

\medskip\noindent Now consider the Hodrick-Prescott filter associated the operator $K$. 
Under the assumption that $u$ and $v$ are independent Gaussian random variables with zero means and covariance operators of trace class of the form
$$
\Sigma_u h(s) = \sum _{n=1}^\infty {\sigma_n ^u  \langle h(s) , u_n(s) \rangle u_n(s)}, 
$$
and
$$
\Sigma_v h(s) = \sum _{n=1}^\infty {\sigma_n ^v  \langle h(s) , u_n(s) \rangle u_n(s)},
$$
respectively, where the sums converge in the operator norm. By Theorem \ref{optimalb}, the optimal smooth operator $B$ given by (\ref{B-hat})  reads 
$$
(\hat{B} h)(s) =  \sum _{n=1}^\infty {\frac{ \sigma_n ^u }{\sigma _n ^v}  \langle h(s) , e_n(s) \rangle e_n(s)}, \qquad h  \in L^2[0,\pi].
$$
The corresponding optimal signal given by (\ref{mini-2}) is
$$
y(\hat B, x) (s)= \sum _{n=1}^\infty {\left(1+\frac{\sigma_n ^u }{\sigma _n ^v}e^{ -2n^2 (\tau -t_0)}\right)^{-1}\langle x(s) ,e_n(s) \rangle e_n(s)}. 
$$

\section{Extension to non-trace class covariance operators} 

In this section we will extend the characterization of the optimal smoothing parameter to the case where the covariance operators of $u$ and $v$ are not necessarily trace class operators. More precisely, we assume that $u \sim N(0,\Sigma _u)$ and $v \sim N(0,\Sigma _v)$ where $\Sigma _u$ and $ \Sigma _v$ are self-adjoint positive-definite and bounded  but not trace class operators on $H_1$ and $H_2$, respectively. This extension includes the important case where $u$ and $v$ are white noise with covariance operators of the form $\Sigma_u=\sigma_u^2I_{H_1}$ and $\Sigma_v=\sigma_v^2I_{H_2}$, respectively, for some constants $\sigma_u$ and $\sigma_v$.

\medskip\noindent  Following Rozanov (1968) (see also Lehtinen {\it et al.} (1989)),  we can look at these Gaussian variables as generalized random variables on an appropriate Hilbert scale (or nuclear countable Hilbert space), where the covariance operators can be maximally extended to self-adjoint positive-definite, bounded and trace class operators on an appropriate domain. 

 \medskip\noindent  We first construct the Hilbert scale  appropriate to our setting. This is performed using the compact operator $A$ as follows (see Engle {\it et al.} (1996) for further details).

\medskip\noindent 
Set $K_1:= (A^*A)^{-1} $. It is densely defined, unbounded, self-adjoint positive-definite, injective, linear operator in the Hilbert space $H_1$. Furthermore,  
\begin{equation}
\label{K1}
K_1 h = \sum_{k=1}^{\infty}\lambda _k ^{-2} \langle h,e_k \rangle e_k,\qquad h\in H_1.  
\end{equation}
We can define the fractional power of the operator $K_1$ by
\begin{equation}
\label{s-K1}
K^s_1 h = \sum_{k=1}^{\infty}\lambda_k ^{-2s} \langle h,e_k \rangle e_k,\quad h\in H_1,\quad s\ge 0,
\end{equation}
and define its domain by
\begin{equation}
\mathcal{D}(K_1^{s}):=\left\{h\in H_1; \quad \sum_{k=1}^{\infty}\lambda _k ^{-4s} |\langle h,e_k \rangle|^2<\infty \right\}.
\end{equation}

\medskip\noindent Let  $ \mathcal{M} $ be the set of all elements $x$ for which all the powers of $K_1$ are defined i.e 
$$ \mathcal{M} := \bigcap \limits _{n=0} ^\infty \mathcal {D} (K_1^{n}).$$
For $s \geq 0$, let $H_1^s$ be the completion of $ \mathcal{M} $  with respect to the Hilbert space norm  induced by the inner product 
\begin{equation}
 \langle x,y\rangle _{H_1^s} :=  \langle K_1^s x,K_1^s y\rangle_ {H_1}, \quad x,y \in \mathcal{M},
\end{equation}
and let  $H_1 ^{-s} := (H_1^s) ^*$ denotes the dual of $H_1^s$ equipped with the following inner product: 
\begin{equation}
 \langle x,y\rangle_{H_1 ^{-s}} :=  \langle K_1^{-s}x,K_1^{-s} y\rangle_{H_1}, \quad x,y \in \mathcal{M}.
\end{equation}
Then,  $(H_1^t)_{t\in \mathbb{R}}$  is the Hilbert scale induced by the operator $K_1$. In particular we have  the following chain of dense continuous embeddings 
\[N \subset \ldots \subset H_1 ^n \subset \ldots H_1^2 \subset H_1 ^1 \subset H_1 \subset H_1^{-1} \subset H_1^{-2} \subset \ldots \subset H_1^{-n}\subset \ldots \subset N^*,\]
where,    $N= \bigcap \limits _{n=0}^\infty  H_1^n $, equipped with the projective limit topology, and $ N^* = \bigcup \limits _{n=0}^ \infty H_1 ^{-n}$,   equipped with the weak topology.

\medskip\noindent The operator $K_2:= (AA^*)^{-1} $ in $H_2$ has the same properties as $K_1$ with 
\begin{equation}
\label{K2}
K_2 h = \sum_{k=1} ^\infty {\lambda_k^{-2} \langle h,d_k\rangle d_k},\quad h\in H_2.
\end{equation} 
Repeating the same procedure as before we get the following chain of dense continuous embeddings 
\[M \subset \ldots \subset H_2 ^n \subset \ldots H_2^2 \subset H_2 ^1 \subset H_2 \subset H_2^{-1} \subset H_2^{-2} \subset \ldots \subset H_2^{-n} \subset \ldots\subset M^*,\]  
where, the norm in $H_2^n$ is given by  $\left\| h \right\| _{H_2^{n}} = \left\| K_2 ^{n} h \right\|_{H_2} $,  $\quad h \in H_2^n.$\\

\medskip\noindent Noting that
\[ H_1^{-n} = \mbox{Im} \left( (A^*A)^n \right) = (A^*A)^n (H_1), \]
 and  
\[ H_2^{-n} = \mbox{Im} \left( (AA^*)^n \right) = (AA^*)^n (H_2). \]
Moreover,  $ \ker (A) = \ker \left( (A^*A)^n \right)$ and $\ker(A^*)= \ker \left( (AA^*)^n \right)$, it follows that the operator $A$ extends to a continuous operator from $H_1^{-n}$ into $ H_2^{-n}$, and the operators  $A^*A$ and  $AA^*$ extend as well to a continuous operator onto $H_1^{-n}$ and $H_2^{-n}$ respectively.

\medskip\noindent 
The flexibility offered by the Hilbert scale allows us to extend the HP filter to  the larger Hilbert spaces $H_1^{-n}$ and $H_2^{-n}$, where $n$ is chosen so that  the second moments $ E[\|x\|^2_{H_1 ^{-n}}], E[\|y\|^2_{H_1 ^{-n}}]$, $E[\|u\|^2_{H_1 ^{-n}}]$ and $E[\|v\|^2_{H_2^{-n}}]$
of the Gaussian random variables $x, y, u$ and $v$  in $H_1^{-n}$ and $H_2^{-n}$ respectively, are finite. This amounts to make their respective covariance operators 
\begin{equation}
\label{tildeuv}
\tilde{\Sigma }_u = (A^*A)^n \Sigma _u (A^*A)^n, \quad \tilde{\Sigma }_v = (AA^*)^n \Sigma _v (AA^*)^n
\end{equation}
and 
\begin{equation}
\tilde\Sigma= \left( \begin{array}{ccc}
\widetilde{\Sigma} _u + \widetilde{Q}_v  && \widetilde{Q}_v \\ 
\widetilde{Q}_v &&  \widetilde{Q}_v 
\end{array} 
\right), 
\end{equation}
where,
\begin{equation}\label{Qv}
\widetilde{Q}_v: =  (A^*A)^n Q_v (A^*A)^n = A^*(AA^*)^{-1}\widetilde{\Sigma}_v(AA^*)^{-1}A,
\end{equation}
trace class.

\medskip We make the following assumption:

\medskip{\bf Assumption 6.} There is $n_0 >0 $ such that  for all $n\geq n_0$ we have 
\[ \sum _{k=1}^\infty {\lambda_k ^{4n-2} \mu_k } < \infty \qquad\text{and}\qquad  \sum _{k=1}^\infty {\lambda_k ^{4n} \tau_k } < \infty. \]
\medskip\noindent  We note that this assumption implies that 
$\sum _{k=1}^\infty {\lambda_k ^{4n} \mu_k } < \infty,$ for $n\ge n_0$.

\medskip\noindent Under Assumption 6, the covariance operators $\tilde\Sigma_u, \tilde\Sigma$ and $ \tilde\Sigma_v$  are trace class on the Hilbert spaces $H_1^{-n}$ and  $H_2^{-n}$, respectively. Moreover, noting that since $y_0 \in \ker ({A})= \ker \left( (A^*A)^n \right)$ it follows that  $\left\| y_0 \right\|_ {H_1^{-n}} = \left\| (A^*A)^n y_0 \right\|_ {H_1} =0$. Hence, the $H_1^{-n}\times H_1^{-n}$-valued random vector $(x,y)$ has mean $(E[x],E[y])=(0, 0)$.

\medskip\noindent  Summing up, by Assumption 6, for $n\ge n_0$, the vector $(x,y)$ is an $H_1^{-n}\times H_1^{-n}$-valued Gaussian vector with mean $(0,0)$ and covariance operator $\tilde\Sigma$.  Thus, in view of Proposition \ref{Mandelbaum}, we have 
\begin{equation} 
E[y|x] =\widetilde{Q}_v\left[\widetilde{\Sigma}_u+\widetilde{Q}_v\right]^{-1}x, \qquad \mbox{a.s. in } H_1^{-n}.
\label{expn}
\end{equation}
provided that the operator 
\begin{equation}\label{HS-N}
\tilde T := \widetilde{\Sigma}_{XY} \widetilde{\Sigma} _X^{-\frac{1}{2}}= \sum_{k=1} ^\infty {\tau_k \lambda_k ^{2(2n-1)} \left( \mu_k \lambda_k ^{4n}  +\tau_k \lambda_k ^{2(2n-1)} \right)^{-\frac{1}{2}} \langle \cdot, e_k \rangle e_k}
\end{equation}
is Hilbert-Schmidt, i.e.
\begin{equation}\label{ThilbertshmidtN}
 \left\| \tilde T \right\| ^2 = \sum_{k=1} ^\infty {\tau_k \lambda_k ^{2(2n-1)} \left( \frac{\mu_k}{\tau_k} \lambda_k ^2  + 1 \right)^{-1}} < \infty.
\end{equation}
But,  in view of Assumption 6, this series is finite.

\medskip\noindent
The deterministic optimal signal associated with $x$ in $H_1^{-n}, n \geq n_0$, is given by the minimizer of the following functional 
\begin{equation}\label{HP-HN-trend}
J_B(y) =  \left\| x-y \right\|_{H_1^{-n}}^2 +  \langle Ay, BAy\rangle_{H_2^{-n}} ,
\end{equation} 
with a linear bounded operator $B: H_2^{-n}\longrightarrow H_2^{-n}$ such that $\langle Ah, BAh\rangle_{H_2^{-n}} \ge0 $ for all $h \in H_1^{-n}$.  
This minimizer is given by the formula (cf. Proposition \ref{mainprop})  
\begin{equation}
y(B,x) = ( I_{H_1^{-n}} + A^* BA ) ^{-1} x.
\label{opty}
\end{equation}
An explicit expression of the optimal smoothing parameter $\hat{B} $ is given in the following 

\begin{theorem}\label{optB}
Let Assumption 6 hold. Then, the operator 
\begin{equation}\label{hat-BN}
\hat Bh:=(AA^*)^{-1}A\tilde\Sigma_u A^*\tilde\Sigma_v^{-1}h, \quad h\in H_2^{-n},
\end{equation}
 is the unique optimal  smoothing operator associated with the HP filter associated with $H_1^{-n}$-valued data $x$.
\end{theorem}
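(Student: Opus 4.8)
The plan is to reduce Theorem~\ref{optB} to the trace-class result Theorem~\ref{optimalb} by carrying the whole problem one level down the Hilbert scale, onto the spaces $H_1^{-n}$ and $H_2^{-n}$ with $n\ge n_0$. Almost all of the groundwork has already been laid in the construction preceding the statement: under Assumption~6 the extended operators $\tilde\Sigma_u$, $\tilde Q_v$ and $\tilde\Sigma$ are trace class, the pair $(x,y)$ is a genuine $H_1^{-n}\times H_1^{-n}$-valued Gaussian vector with mean $(0,0)$ (since $\|y_0\|_{H_1^{-n}}=0$) and covariance $\tilde\Sigma$, the conditional expectation is given by (\ref{expn}) via Proposition~\ref{Mandelbaum} (the Hilbert--Schmidt requirement (\ref{ThilbertshmidtN}) being guaranteed by Assumption~6), and the deterministic minimizer is $y(B,x)=(I_{H_1^{-n}}+A^*BA)^{-1}x$ from (\ref{opty}). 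Thus the single remaining task is to run the variational computation of Theorem~\ref{optimalb} with $(H_1,H_2,\Sigma_u,\Sigma_v,Q_v)$ replaced by $(H_1^{-n},H_2^{-n},\tilde\Sigma_u,\tilde\Sigma_v,\tilde Q_v)$.

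First I would check that the hypotheses of Theorem~\ref{optimalb} transfer. Assumptions~1 and~2 are unaffected; the trace-class property of $\tilde\Sigma_u$ and $\tilde Q_v$ (the analogues of Assumptions~3 and~5) and the Hilbert--Schmidt property of $\tilde T$ (the analogue of (\ref{Thilbertshmidt})) are precisely the content of Assumption~6. The decisive point is the analogue of Assumption~4, namely $\Pi\tilde\Sigma_u=\tilde\Sigma_u\Pi$, which I would verify directly from the singular system: since $A^*Ae_k=\lambda_k^2 e_k$ and $\Sigma_u e_k=\mu_k e_k$, the extended operator acts diagonally, $\tilde\Sigma_u e_k=(A^*A)^n\Sigma_u(A^*A)^n e_k=\lambda_k^{4n}\mu_k e_k$, and $\Pi=\sum_k\langle\cdot,e_k\rangle e_k$ is diagonal in the same orthonormal system; hence the two commute automatically, so the commuting condition is no longer an extra assumption but holds by construction.

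I would then repeat the proof of Theorem~\ref{optimalb} verbatim, writing $\bar\Pi:=I_{H_1^{-n}}-\Pi$ and decomposing $y(B,x)-E[y|x]=\Lambda_1+\Lambda_2$ as there. The vanishing of the cross term $\langle\Lambda_1,\Lambda_2\rangle$ rests only on $\bar\Pi A^*=0$ (equivalently $\Pi A^*=A^*$), on $A\bar\Pi=0$ whence $\tilde Q_v\bar\Pi=0$, and on the Neumann expansion of $(I_{H_1^{-n}}+A^*BA)^{-1}$, all of which survive the passage to the scale once one invokes, from the text, the continuous extension of $A$ to $H_1^{-n}\to H_2^{-n}$ and of $A^*A$, $AA^*$ to the corresponding spaces. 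Solving the resulting operator identity $\Pi\tilde\Sigma_u=A^*\hat B\,\tilde\Sigma_v(AA^*)^{-1}A$ exactly as before gives $\hat B=(AA^*)^{-1}A\tilde\Sigma_u A^*\tilde\Sigma_v^{-1}$, which is (\ref{hat-BN}); uniqueness is inherited from Theorem~\ref{optimalb}. A short coordinate computation, in which the factors $(A^*A)^n$ and $(AA^*)^n$ cancel, shows $\hat B d_k=\frac{\mu_k}{\tau_k}d_k$, so $\hat B$ has the same closed form (\ref{B-hat}) as in the trace-class case and is in particular bounded, compact and injective on $H_2^{-n}$.

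The main obstacle is not the algebra but the functional-analytic bookkeeping on the Hilbert scale: one must ensure that each operator identity borrowed from Theorem~\ref{optimalb}---boundedness and invertibility of $I_{H_1^{-n}}+A^*BA$, convergence of its Neumann series, the relations $\Pi A^*=A^*$ and $\tilde Q_v\bar\Pi=0$, and the mapping property $\hat B:H_2^{-n}\to H_2^{-n}$---remains valid for the $H_1^{-n}$ and $H_2^{-n}$ inner products rather than those of $H_1$ and $H_2$. Because every operator in sight is diagonalized by the singular system $(\lambda_k,e_k,d_k)$ and the scale norms merely reweight the coefficients by powers of $\lambda_k$, I would verify each identity coordinatewise, so that all continuity and summability statements reduce to the two convergence conditions in Assumption~6.
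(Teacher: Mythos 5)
Your proposal is correct and ends with the same algebra as the paper, but it reaches the characterizing operator identity by a more roundabout route. The paper's proof of Theorem~\ref{optB} is a two-line computation: it directly \emph{imposes} $E[y|x]=\widetilde{Q}_v[\widetilde{\Sigma}_u+\widetilde{Q}_v]^{-1}x=(I_{H_1^{-n}}+A^*\hat BA)^{-1}x$, multiplies through by $I+A^*\hat BA$ and then by $\widetilde{\Sigma}_u+\widetilde{Q}_v$ to get $\widetilde{\Sigma}_u=A^*\hat B\widetilde{\Sigma}_v(AA^*)^{-1}A$ (no $\Pi$ on the left, in contrast with Theorem~\ref{optimalb}), and inverts. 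You instead re-run the full $\Lambda_1+\Lambda_2$ decomposition and cross-term argument of Theorem~\ref{optimalb}; that is legitimate and in fact supplies the justification the paper leaves implicit for why setting $y(B,x)=E[y|x]$ yields the minimizer. What you do not exploit is the simplification that makes the paper's shortcut valid: on $H_1^{-n}=\mathrm{Im}\left((A^*A)^n\right)$ every element of $\mathrm{Ker}(A)=\mathrm{Ker}\left((A^*A)^n\right)$ has zero $H_1^{-n}$-norm, so $I_{H_1^{-n}}-\Pi$ is the zero operator on the scale; hence not only $y_0$ but the whole term $\Lambda_1=\bar\Pi x-y_0$ vanishes, the orthogonality argument becomes vacuous, and the attainable minimum is exactly $0$ rather than the residual $\|(I-\Pi)(x-E[x])\|^2$ of the trace-class case. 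This also reconciles your identity $\Pi\widetilde{\Sigma}_u=A^*\hat B\widetilde{\Sigma}_v(AA^*)^{-1}A$ with the paper's ($\Pi$ acts as the identity on $H_1^{-n}$, and $A\Pi=A$ makes the subsequent inversion steps identical), so both routes land on (\ref{hat-BN}). Your two side observations --- that the analogue of Assumption~4 holds automatically because all operators are diagonal in the singular system, and that the powers $\lambda_k^{4n}$ cancel to give $\hat Bd_k=(\mu_k/\tau_k)d_k$ --- are correct and consistent with the white-noise computation of Section~4.1; the only caveat you inherit from the paper is the unjustified use of the Neumann series for $(I+A^*BA)^{-1}$, which is not a new gap of your own.
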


\begin{proof}  The minimizer $\hat B$ of   $\left\| y(B ,x)- E[y|x] \right\|_{H_1^{-n}}$ is determined by imposing 
\begin{equation}
\label{bhatN}
 E[y|x] =  \widetilde{Q}_v\left[\widetilde{\Sigma}_u+\widetilde{Q}_v\right]^{-1}x =  ( I_{H_1^{-n}} + A^* BA ) ^{-1} x,  \quad  x \in H_1^{-n}.
\end{equation}
 
\medskip\noindent Multiplying (\ref{bhatN}) with $I_{H_1}+A^*\hat BA$ and then with $\widetilde{\Sigma}_u+\widetilde{Q}_v$, we get 
$$
\widetilde{\Sigma}_u = A^* \hat{B} \widetilde{\Sigma}_v (AA^*)^{-1}A.
$$
Furthermore, multiplying both sides of this equality with $A^*$ and then with $(AA^*)^{-1}A$, we finally obtain
\begin{equation}\label{B-hat-2}
\hat B=(AA^*)^{-1}A\widetilde{\Sigma}_u A^*\widetilde{\Sigma}_v^{-1}.
\end{equation}
This finishes the proof.
\end{proof}

\subsection{The white noise case-  Optimality of the noise-to-signal ratio}
In this section we  apply Theorem \ref{optB} to the case where  $u$ and $v$ are white noise. We will show that the optimal smoothing operator $\hat B$ given by (\ref{B-hat-2}) reduces to  the noise-to-signal ratio in the same fashion as for the classical HP filter. Indeed, assume  $u$ and $v$ are independent Gaussian random variables with zero means and covariance operators  $\Sigma_u = \sigma_u I_{H_1 }$ and $\Sigma_v = \sigma_v I_{H_2 }$, where $I_{H_1 }$ and $I_{H_2 }$ denotes the $H_1$ and $H_2$ identity operators, respectively and $\sigma_u$ and $\sigma_v$ are constant scalars. Assumption 4, reduces to

\medskip{\bf Assumption 7.} There is an $n_0>0$ such that  $\sum _{k=1} ^ \infty {\lambda _k^{2(2n-1)} } < \infty $ for all $n \ge n_0$.\\

\medskip\noindent Under this assumption, the  associated covariance operators 
\begin{equation*}
\tilde{\Sigma }_u= \sigma_u \sum_{k=1}^\infty {\lambda_k ^{4n} \langle \cdot,e_k\rangle e_k},\quad
\tilde{\Sigma }_v = \sigma_v \sum_{k=1}^\infty {\lambda_k ^{4n} \langle \cdot,d_k\rangle d_k},\quad \widetilde{Q}_v=  \sigma_v \sum_{k=1}^\infty {\lambda_k ^{2(2n-1)} \langle \cdot,e_k\rangle e_k}.
\end{equation*}
 are all trace class operators. Moreover, the norm (\ref{ThilbertshmidtN}) of the operator $\tilde T$  which reads
\begin{equation}
 \left\| \tilde T \right\| ^2 = \sum_{k=1} ^\infty {\sigma_v \lambda_k ^{2(2n-1)} \left( \frac{\sigma_u}{\sigma_v} \lambda_k ^2  + 1 \right)^{-1}},
 \label{hsn}
\end{equation}
 is finite, making $\tilde T$ Hilbert-Schmidt.  Hence, the expression (\ref{B-hat-2}) giving the optimal smoothing operator $\hat B$ reduces to 

 \begin{equation}
\hat B = (AA^*)^{-1}A\Sigma_u A^*\Sigma_v^{-1}= \frac {\sigma_u}{\sigma_v} \sum_{k=1}^\infty { \langle \cdot,d_k\rangle d_k} = \frac {\sigma_u}{\sigma_v} I_{H_2^{-n}},
\end{equation}

i.e. $\hat B$ is the noise-to-signal ratio.

%\clearpage


\begin{thebibliography}{9}
\addcontentsline{toc}{section}{\refname}

\bibitem{stuart}
Agapiou, S., Larsson, S.,  Stuart A. M. (2012 ): Posterior Consistency of the Bayesian Approach to Linear Ill-Posed Inverse Problems. \emph{arXiv:1203.5753v2 [math.ST].} 

\bibitem{besse}  Besse, P., Cardot,  H. and Stephenson, D.(2000): Autoregressive forecasting of some functional climatic variations. \emph{ Scandinavian Journal of Statistics (27) pp. 673-687.}

\bibitem{bosq} Bosq, D. (2000): Linear Processes in Function Spaces. \emph{ Lecture Notes in Statistics, Vol. 149, Springer-Verlag, New York. }

\bibitem{djehiche1}
Dermoune, A., Djehiche, B. and Rahmania, N. (2008): Consistent Estimator of the Smoothing Parameter in the Hodrick-Prescott Filter. \emph{J. JapanStatist. Soc., Vol. 38 (No. 2), pp. 225-241.} 

\bibitem{djehiche2}
Dermoune, A., Djehiche, B. and Rahmania, N.
(2009): Multivariate Extension of the Hodrick-Prescott Filter-Optimality and Characterization.
\emph{ Studies in Nonlinear Dynamics \& Econometrics,  13 , pp. 1-33.} 

\bibitem{Do} Dolbeault, J., Felmer, P. and Mayorga-Zambrano, J. (2008): Compactness properties for trace-class operators and application to quantum mechanics. \emph{Monatsh Math 155, 43–66}.

\bibitem{heinz} Engl, H. W., Hanke, M.,  Neubauer, A. (1996): Regularization of inverse problems.  Mathematics and its Applications, Vol. 375.  Kluwer Academic Publishers,  Dordrecht.

\bibitem{ferraty} Ferraty, F. and Vieu, P. (2006): Nonparametric Functional Data Analysis: Methods, Theory, Applications and Implementations, Springer-Verlag, London, 2006.

\bibitem{frank} Frank, I. and Friedman, J. (1993): A statistical view of some chemometrics regression tools, \emph{Technometrics 35 pp. 109-148.}


\bibitem{haistie2} Hastie, T., Mallows, C. (1993): A discussion of "A Statistical View of Some Chemometrics Regression Tools" by I.E. Frank and
J.H. Friedman. \emph{Technometrics (35), pp. 140-143.}


\bibitem{hastie} Hastie, T., Buja, A., Tibshirani, R. (1995): Penalized discriminant analysis. \emph{Ann. Statist. (23) pp. 73-102.}


\bibitem{hida}
Hida T., (1980): Brownian Motion. 
\emph{ Springer-Verlag, New York-Berlin.} 

\bibitem{HP} Hodrick, R. and Prescott, E. C. (1997): Postwar U.S. business cycles: An empirical investigation.
{\it Journal of Money, Credit and Banking.} {\bf 29(1)}, 1-16.


\bibitem{Kaipio}
Kaipio, J. and Somersalo, E. (2004): Statistical and Computational Inverse Problems.
\emph{ Applied Mathematical Series,  Vol. 160  Springer, Berlin.} 

\bibitem{LPS} Lehtinen, M.S., P\"aiv\"arinta, L. and Somersalo, E. (1989):  Linear inverse problems for generalized random variables. Inverse Problems, 5:599-612.

\bibitem{Mandelbaum}
 Mandelbaum, A. (1984): Linear estimators and measurable linear transformations on a Hilbert space.
\emph{ Z. Wahrscheinlichkeitstheorie Verw. Gebiete,  65, pp. 385-387.} 

\bibitem{muller} M\"uller, H.-G. and Stadtm\"uller, U. (2005): Generalized functional linear models, \emph{The Annals of Statistics (33) pp. 774-805.}

\bibitem{nair}  Nair, M. T. (2009): Linear Operator Equations: Approximation and Regularization, World Scientific, Singapore.

\bibitem{preda} Preda, C. and Saporta, G. (2005): PLS regression on a stochastic process, \emph{Computational Statistics \& Data Analysis (48) pp. 149-158.}

\bibitem{ramsay} Ramsay, J. O. and Silverman, B. W. (1997): {\it  Functional Data Analysis}, Springer-Verlag, New York.

\bibitem{reed-simon} Reed, M. and Simon, B. (1972): Methods of Modern Mathematical Physics, Vol. I. Functional Analysis. Academic Press, New York.


 (volume 1), Academic Press, San Diego.


\bibitem{Roz} Rozanov, Ju. A. (1968): {\it Infinite-dimensional Gaussian distribution}. Proc. Steklov Inst. Math. (108) (Engl. transl. 1971 (Providence RI: AMS)). 

\bibitem{Schlicht} Schlicht, E. (2005): Estimating the Smoothing Parameter in the So-Called Hodrick-Prescott Filter.
\emph{ J. Japan Statist. Soc., Vol. 35 No. 1, 99-119.} 

\bibitem{Sko} Skorohod A. V. (1974): {\it Integration in Hilbert Spaces},  Springer-Verlag, Berlin.

\bibitem{stuart2} Stuart, A.M. (2010): Inverse problems: a Bayesian perspective, \emph{Acta Numerica 19 (2010), 451-559.} 

\end{thebibliography}
\end{document}